\newcommand{\bburl}[1]{\textcolor{blue}{\url{#1}}}
\numberwithin{equation}{section}
\newtheorem{thm}{Theorem}[section]
\theoremstyle{plain}
\newcommand\be{\begin{equation}}
\newcommand\ee{\end{equation}}
\newcommand\bea{\begin{eqnarray}}
\newcommand\eea{\end{eqnarray}}
\newcommand\bi{\begin{itemize}}
\newcommand\ei{\end{itemize}}
\newcommand\ben{\begin{enumerate}}
\newcommand\een{\end{enumerate}}
\newcommand\bc{\begin{center}}
\newcommand\ec{\end{center}}
\newcommand\ba{\begin{array}}
\newcommand\ea{\end{array}}
\newcommand\frakfamily{\usefont{U}{yfrak}{m}{n}}
\DeclareTextFontCommand{\textfrak}{\frakfamily}
\newtheorem{rek}[thm]{Remark}
\newcommand{\ncr}[2]{{#1 \choose #2}}
\newcommand{\hr}[1]{\href{#1}{\url{#1}}}
\newcommand{\mandm}{M\&M}
\newcommand{\mandms}{M\&M'S}
\title{The M\&M Game: From Morsels to Modern Mathematics}
\author{Ivan Badinskki}
\email{\textcolor{blue}{\href{mailto:ivan.badinski@gmail.com}{ivan.badinski@gmail.com}}}
\address{Department of Economics, MIT, Cambridge, MA 02139}
\author{Christopher Huffaker}
\email{\textcolor{blue}{\href{mailto:ckh1@williams.edu}{ckh1@williams.edu}}}
\address{Department of Mathematics and Statistics, Williams College, Williamstown, MA 01267}
\author{Nathan McCue}
\email{\textcolor{blue}{\href{mailto:nrm2@williams.edu}{nrm2@williams.edu}}}
\address{Department of Mathematics and Statistics, Williams College, Williamstown, MA 01267}
\author{Cameron N. Miller}
\author{Kayla S. Miller}
\author{Steven J. Miller}
\email{\textcolor{blue}{\href{mailto:sjm1@williams.edu, Steven.Miller.MC.96@aya.yale.edu}{sjm1@williams.edu,Steven.Miller.MC.96@aya.yale.edu} }}
\address{Department of Mathematics and Statistics, Williams College, Williamstown, MA 01267}
\author{Michael Stone}
\email{\textcolor{blue}{\href{mailto:ms14@williams.edu}{ms14@williams.edu}}}
\address{Department of Mathematics and Statistics, Williams College, Williamstown, MA 01267}
\thanks{This work was partially supported by NSF Grant DMS1265673. The fifth named author thanks Donald Cameron for the opportunity to talk on this problem at the 110\textsuperscript{th} meeting of the Association of Teachers of Mathematics in Massachusetts. We thank Stan Wagon for suggesting and sharing his solution to the probability the first shooter wins the hoops game when both probabilities are independent uniform random variables, and Frank Morgan for comments on an earlier draft.}
\subjclass[2010]{TBD (primary) TBD (secondary)}
\keywords{TBD}
\date{\today}
\begin{document}

\maketitle

\begin{abstract} To an adult, it's obvious that the day of someone's death is not precisely determined by the day of birth, but it's a very different story for a child. When the third named author was four years old he asked  his father, the fifth named author: If two people are born on the same day, do they die on the same day? While this could easily be demonstrated through murder, such a proof would greatly diminish the possibility of teaching additional lessons, and thus a different approach was taken. With the help of the fourth named author they invented what we'll call \emph{the M\&M Game}: Given $k$ people, each simultaneously flips a fair coin, with each eating an M\&M on a head and not eating on a tail. The process then continues until all \mandms\  are consumed, and two people are deemed to die at the same time if they run out of \mandms\  together\footnote{Is one really living without \mandms?}. This led to a great concrete demonstration of randomness appropriate for little kids; it also led to a host of math problems which have been used in probability classes and math competitions. There are many ways to determine the probability of a tie, which allow us in this article to use this problem as a springboard to a lot of great mathematics, including memoryless process, combinatorics, statistical inference, graph theory, and hypergeometric functions.
\end{abstract}

\tableofcontents

\section{The Origins of The Game}

The M\&M Game began as a simple question asked by Steven Miller's curious four year-old son Cam: If two people are born on the same day, do they die on the same day? Of course, needing a way to explain randomness to children (two year old Kayla was there as well), the three Millers took the most logical next step and used \mandms\  to give the answer - with a more fun question! This led to what we now call the \mandm\ Game (see Figure \ref{fig:MandMgamepics} for an illustration):\begin{quote}  \emph{You and some friends start with some number of \mandms. Everyone  flips a fair coin at the same time; if you get a head you eat an \mandm; if you get a tail you don't.  You continue tossing coins together until no one has any \mandms\ left, and whoever is the last person with an \mandm\ lives longest and `wins'.} \end{quote}

\begin{figure}[h]
\begin{center}
\scalebox{.537}{\includegraphics{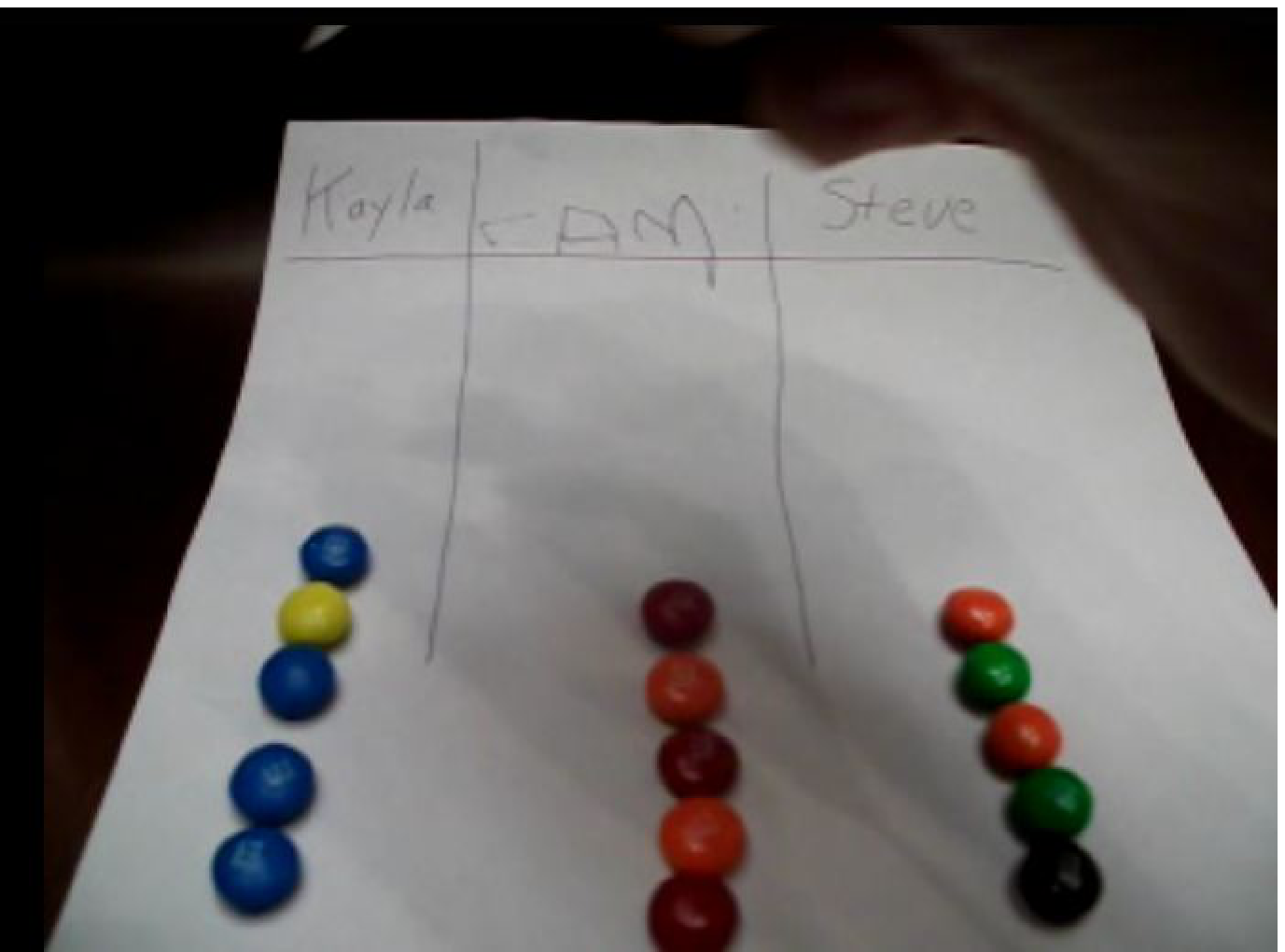}}\ \scalebox{.5}{\includegraphics{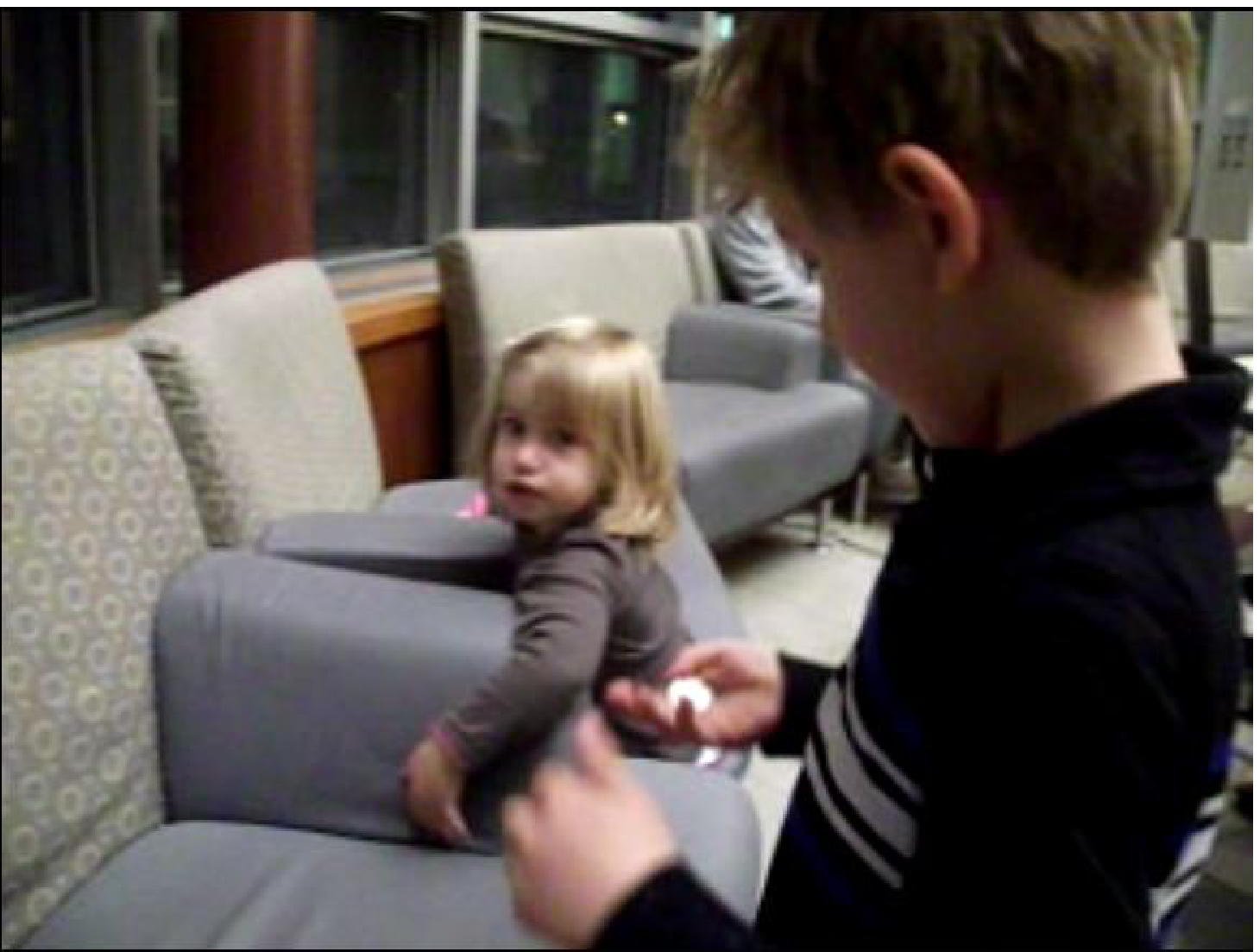}}
\caption{\label{fig:MandMgamepics} The first \mandm\ Game; for young players there is an additional complication in that it matters which colors you have, and the order you place them down.}
\end{center}\end{figure}

We can reformulate Cam's  question on randomness to: If everyone starts with the same number of \mandms, what is the chance everyone eats their last M\&M at the same time? In the arguments below we'll concentrate on two people playing with $c$ (for Cam) and $k$ (for Kayla) \mandms, though we encourage you to extend to the case of more people playing, possibly with a biased coin. As we will see in the following analysis, probability games like this one are a great way to see useful but complicated mathematical processes. In the course of our investigations we'll see some nice results in combinatorics and graph theory, and see applications of memoryless processes, statistical inference and hypergeometric functions. Such consequences are typical of good problems: in addition to being interesting, they serve as an excellent springboard to good concepts.

Recalling that the binomial coefficient $\ncr{n}{r} = \frac{n!}{r!(n-r)!}$ denotes the number of ways to choose $r$ objects from $n$ when order doesn't matter, we can compute the probability $P(k,k)$ of a tie when two people start with $k$ \mandms. If we let $P_n(k,k)$ denote the probability that the game ends in a tie with both people starting with $k$  \mandms\ after \emph{exactly} $n$ moves, then $$P(k,k) \ = \ \sum_{n=k}^\infty P_n(k,k);$$ note that we are starting the sum at $k$ as it is impossible all the \mandms\ are eaten in fewer than $k$ moves (we could start the sum at zero, but since $P_n(k,k) = 0$ for $n < k$ there is no need).

We claim that $$P_n(k,k) \ = \  \ncr{n-1}{k-1} \left(\frac12\right)^n \ncr{n-1}{k-1} \left(\frac12\right)^n.$$ This formula follows from the following observation: if the game ends in a tie after $n$ tosses, then each person has \emph{exactly} $k-1$ heads in their first $n-1$ tosses. As we have a fair coin, each string of heads and tails of length $n$ for a player has probability $(1/2)^n$. The number of strings for each person where the first $n-1$ tosses have \emph{exactly} $k-1$  heads, and the $n$\textsuperscript{th} toss is a head (we need this as otherwise we do not have each person eating their final \mandm\ on the $n$\textsuperscript{th} move) is $\ncr{n-1}{k-1} \ncr{1}{1}$. The $\ncr{1}{1}$ reflects the fact that the last toss must be a head; as this is just 1 it is common to omit that factor. As there are two players, the probability that each has their $k$\textsuperscript{th} head after the $n$\textsuperscript{th} toss is the product, proving the formula.

We have thus shown the following.

\begin{thm}\label{thm:maininfiniteexpansion} The probability the \mandm\ Game ends in a tie with two people using fair coins and starting with $k$ \mandms\ is \be\label{eq:probtiek} P(k,k) \ = \ \sum_{n=k}^\infty \ncr{n-1}{k-1} \left(\frac12\right)^n \ncr{n-1}{k-1} \left(\frac12\right)^n \ = \ \sum_{n=k}^\infty \ncr{n-1}{k-1}^2 \frac{1}{2^{2n}}.\ee
\end{thm}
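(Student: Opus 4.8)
The plan is to recover the closed form for $P(k,k)$ by decomposing the event of a tie according to the move on which the game ends, exactly as in the informal discussion above. First I would fix $n \ge k$ and analyze the event that the game ends in a tie after \emph{exactly} $n$ tosses. The crucial structural observation is that the two players' coin sequences evolve independently: player one's pile is exhausted precisely when her cumulative head count first reaches $k$, and likewise for player two, with no interaction between the two processes. Hence a tie at exactly move $n$ is equivalent to the conjunction of two independent events, one for each player, namely that this player's $k$\textsuperscript{th} head occurs on toss $n$.

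Next I would count, for a single player, the number of length-$n$ head/tail strings in which the $k$\textsuperscript{th} head falls on the $n$\textsuperscript{th} toss. Such a string must have its $n$\textsuperscript{th} entry a head and exactly $k-1$ heads distributed among the first $n-1$ entries; there are $\ncr{n-1}{k-1}$ ways to place those heads (and $\ncr{1}{1}=1$ way to fix the last toss), and each such string has probability $(1/2)^n$ since the coin is fair. Therefore the probability that a given player eats her final \mandm\ on move $n$ is $\ncr{n-1}{k-1}(1/2)^n$, and by independence the probability of a tie at exactly move $n$ is $P_n(k,k) = \ncr{n-1}{k-1}^2 (1/2)^{2n}$.

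Finally, since the events ``tie at exactly move $n$'' are disjoint over $n$ and together exhaust the event of a tie (a tie cannot occur before move $k$, as at least $k$ tosses are needed to consume $k$ \mandms), I would sum over $n \ge k$ to obtain $P(k,k) = \sum_{n=k}^\infty P_n(k,k) = \sum_{n=k}^\infty \ncr{n-1}{k-1}^2 2^{-2n}$, which is \eqref{eq:probtiek}.

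I do not expect any serious obstacle; the only point requiring care is the bookkeeping around the ``exactly $n$ moves'' condition — in particular insisting that the $n$\textsuperscript{th} toss be a head, rather than merely that there be $k$ heads somewhere among the first $n$ tosses — since conflating these would overcount. A secondary subtlety worth a remark is the convergence of the infinite series, equivalently the fact that the game terminates in finitely many moves almost surely; this holds because each player independently tosses a head eventually with probability one, so the sum is well defined and the term-by-term decomposition is legitimate.
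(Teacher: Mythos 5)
Your proposal is correct and follows essentially the same route as the paper: decompose by the exact move $n$ on which the game ends, count the $\ncr{n-1}{k-1}$ strings per player with the $k$\textsuperscript{th} head on toss $n$, square by independence, and sum over $n \ge k$. The convergence point you flag is also handled in the paper (via the elementary bound $\ncr{n-1}{k-1} \le n^{k-1}/k!$ in a footnote), so there is nothing to add.
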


While the above formula solves the problem, it is unenlightening and difficult to work with. The first difficulty is that it involves an infinite sum over $n$.\footnote{In general we need to be careful and make sure any infinite sum converges; while we are safe here as we are summing probabilities, we can elementarily prove convergence. Note $\ncr{n-1}{k-1} \le n^{k-1}/k!$, and thus the sum is bounded by $ k!^{-2} \sum_{n \ge k} n^{2k-2} / 2^{2n}$; as the polynomial $n^{2k-2}$ grows significantly slower than the exponential factor $2^{2n}$, the sum rapidly converges. } Second, it is very hard to sniff out the $k$-dependence: if we double $k$, what does that do to the probability of a tie? It is highly desirable to have exact, closed form solutions so we can not only quickly compute the answer for given values of the parameter, but also get a sense of how the answer changes as we vary those inputs. In the sections below we'll look at many different approaches to this problem, most of them trying to convert the infinite sum to a more tractable finite problem.

\section{The Basketball Problem, Memoryless Processes and the Geometric Series Formula}

\subsection{A Basketball Game}

It turns out that we can easily convert the infinite \mandm\ Game sum, equation \eqref{eq:probtiek}, into a finite sum using a powerful observation: we have a \emph{Memoryless Process}. Briefly, what this  means is that the behavior of the system only depends on the values of the parameters at a given moment in time, and not on how we got there.

There are many examples where all that matters is the configuration, not the path taken to reach it. For example, imagine a baseball game. If the lead-off hitter singles or walks, the net effect is to have a runner on first and the two results are the same.\footnote{For the baseball purist, there could be a very slight difference as a single breaks up a no-hit attempt, and if the next 26 batters are retired the pitcher might perform differently with a no-hitter on the line!} For another example,  consider a game of Tic-Tac-Toe; what matters are where the X's and O's are on the board, not the order they are placed. While chess at first might seem like a perfect example, it fails as many people play that if there is ever a configuration repeated three times in the game then the game is declared a draw; thus in chess we need to know \emph{how} we reached our state, and not just what state we are in.

Before delving into the reduction of the \mandm\ Game into a finite problem, we'll look at a related problem that's a little simpler but illustrates the same point. Moreover, we can easily extract from this problem the famous geometric series formula!

Imagine two of the greatest basketball players of all time, Larry Bird of the Boston Celtics and Magic Johnson of the Los Angeles Lakers\footnote{The players chosen reflect the childhood experiences of the eldest author.} are playing a basketball game. Instead of the intense competition which characterized the matches between their teams (see Figure \ref{fig:BirdMagic}) they instead play a one-on-one game of hoops as follows. \begin{quote} \emph{In this contest, Bird and Magic alternate shooting free throws, with Bird going first. Assume Bird always makes a basket with probability $p_L$, while Magic always gets a basket with probability $p_M$. If the probability Bird wins is $x_B$, what is $x_B$?} \end{quote}

\begin{figure}[h]
\begin{center}
\scalebox{.85}{\includegraphics{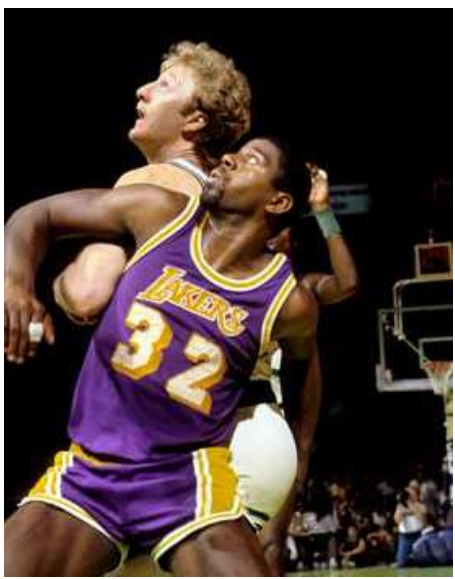}}
\caption{\label{fig:BirdMagic} Larry Bird and Magic Johnson, Game 2 of the 1985 NBA Finals (Boston, MA). Image from Steve Lipofsky from Wikipedia Commons (\bburl{http://www.basketballphoto.com/NBA_Basketball_Photographs.htm}).}
\end{center}\end{figure}

Note that this is almost a simplified M\&M Game: there is only one M\&M, but the players take turns flipping their coins. We'll see, however, that it is straightforward to modify the solution.

\subsection{Solution from the Geometric Series Formula}

The standard way to solve this problem uses a geometric series. Similar to the analysis in the introduction, the probability that Bird wins is the sum of the probabilities that Bird wins on his $n$\textsuperscript{th} shot. We'll see in the analysis below that it's algebraically convenient to define $r := (1-p_B)(1-p_M)$, which is the probability they both miss.\footnote{A quick word on notation. We use $p$ to denote probability, and put subscripts $B$ and $M$ so we can easily determine if we're talking about Bird or Magic; we use the letter $r$ for ratio, which will make sense when we see the geometric series with ratio $r$ emerge shortly. There is enormous value in good notation -- we can get a better understanding of what is going on simply by glancing down at the formula and quickly parsing the terms.} Let's go through the cases. We assume that $p_B$ and $p_M$ are not both zero; if they were, then neither can hit a basket. Not only would this mean that our ranking of them as two of the all-time greats is wrong, but the game will never end and thus there's no need to do any analysis!

\begin{enumerate}
\item Bird wins on his 1\textsuperscript{st} shot with probability $p_B$.
\item Bird wins on his 2\textsuperscript{nd} shot with probability $(1-p_B)(1-p_B)p_B = rp_B$.
\item Bird wins on his $n$\textsuperscript{th} shot with probability $(1-p_B)(1-p_M) \cdot (1-p_B)(1-p_M)$ $\cdots$ $(1-p_B)(1-p_M)p_B$ $=$ $r^{n-1}p_B$.
\end{enumerate}

To see this, if we want Bird to win on shot $n$ then we need to have him and Magic miss their first $n-1$ shots, which happens with probability $\left((1-p_B)(1-p_M)\right)^{n-1} = r^{n-1}$, and then Bird hits his $n$\textsuperscript{th} shot, which happens with probability $p_B$. The important thing to remember here is that we have broken the problem down into all of the possible ways Bird can beat Magic. In doing so, notice how the geometric series is surfacing! This makes sense since we have $n-1$ trials where Bird and Magic miss, and on the $n$\textsuperscript{th} shot, Bird makes the basket and wins the game. Thus
\begin{align*}
\text{Prob}(\text{Bird wins}) \ = \ x_B \ = \  p_B + rp_B + rp_B^2 +rp_B^3 + \cdots \ = \ p_B\sum_{n=0}^\infty r^n.
\end{align*}
which is a geometric series. As we assumed $p_B$ and $p_M$ are not both zero,  $r = (1-p_B)(1-p_M)$ satisfies $|r| < 1$ and we can use the geometric series formula to deduce \begin{align*} x_B \ = \ \frac{p_B}{1-r} \ = \ \frac{p_B}{1 - (1-p_B)(1-p_M)}.\end{align*}

We have made enormous progress. We converted our infinite series into a \textbf{\textit{closed-form expression}}, and we can easily see how the probability of Bird winning changes as we change $p_B$ and $p_M$; we display this in Figure \ref{fig:birdmagicplot}.

\begin{figure}
\begin{center}
\scalebox{1}{\includegraphics{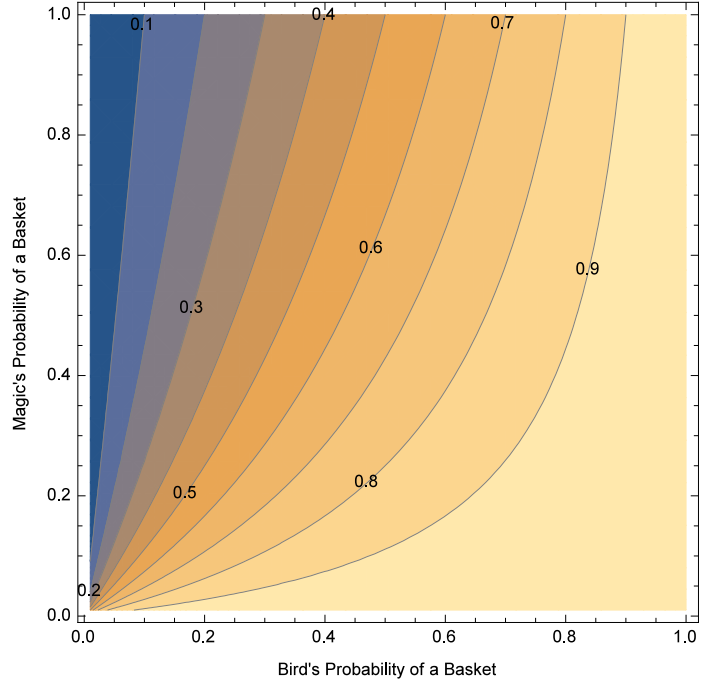}}
\caption{\label{fig:birdmagicplot} Probability Bird, shooting first, gets a basket before Magic.}
\end{center}\end{figure}


Note the plot supports our intuition. As the probability of Bird making a basket rises to 1, it doesn't matter what Magic's probability is as Bird will almost surely win on his first shot. Further, if the two probabilities are equal then Bird should win more than half of the time, as there is an advantage in going first.

\subsection{Solution through Memoryless Process and the Geometric Series Formula}

We now give a second solution to the basketball game. Not only does this approach avoid needing to know the geometric series formula, but it gives a proof of it!

Recall the assumptions we made. The probability Bird makes a shot is $p_B$, the probability Magic hits a basket is $p_M$, and the probability they both miss is $r := (1-p_B)(1-p_M)$. There is a lot hidden in these statements. We are assuming the two never tire; they always make baskets with a fixed probability. We can use this to compute $x_B$, the probability Bird wins, in another way. Before we wrote $x_B$ as a sum over the probabilities that Bird won in $n$ games. We claim that $$\text{Prob}(\text{Bird wins}) \ = \  x_B \ = \ p_B + rx_B.$$

To see this, note either Bird makes his first basket and wins (which happens with probability $p_B$) or he misses (with probability $1-p_B$). If Bird is going to win, then Magic must miss his first shot, and this happens with probability $1-p_M$. Something interesting happens, however, if both Bird and Magic miss: \emph{we have reset our game to its initial state!} Since both have missed, it's as if we just started playing the game right now. Note this would not be true if we stopped the analysis after Bird misses, as then Magic would have the next shot and the advantage. Since both miss and Bird has the ball again, by definition the probability Bird wins from this configuration is $x_B$, and thus the probability he wins is $p_B + (1-p_B)(1-p_M)x_B$.

Solving for $x_B$, the probability Bird beats Magic is \[x_B \ = \ \frac{p_B}{1-r_B}.\] As this must equal the infinite series expansion from the previous subsection, we deduce the geometric series formula: $$\frac{p_B}{1- r} \ = \ p_B \sum_{n=0}^\infty r^n \ \ \ {\rm therefore}\ \ \ \sum_{n=0}^\infty r^n \ = \ \frac1{1-r}.$$

\begin{rek} We have to be a bit careful. It's important to keep track of assumptions. In our analysis $r = (1-p_B)(1-p_M)$ with $0 \le p_B, p_M \le 1$ and both $p_B$ and $p_M$ are not zero. Thus we have only proved the geometric series formula if $0 \le r < 1$ (actually, if $p_B = 0$ we cannot divide both sides by $p_B$, and some care is needed). With a bit more work we can convert this to a proof for all real $|r| < 1$ by splitting the infinite sum into a sum over even and odd powers, and using the formula twice; we encourage you to make this rigorous. \end{rek}

Let's look closely at what we've done in this subsection. The key observation was to notice that we have a \textbf{\textit{memoryless process}}. In the infinite sum approach, which led to an infinite geometric series, we cared about each time Bird and Magic miss a free throw. In our new approach we just care about them both missing once. The reason is that if Bird and Magic both miss, the game essentially starts over, and the game has \emph{no memory} of what previously occurred. The  advantage to this method is that by reducing the game to the same state we start with, we turn an \emph{infinite} calculation into a \emph{finite} one! In general it is incredibly difficult to come up with a workable expression for an infinite series, and finite expressions are easier to compute. Thus, perhaps there is hope that we can convert the solution to the \mandm\ Game, equation \eqref{eq:probtiek}, into an equivalent finite sum....

\subsection{Lessons}

Before returning to the M\&M game, there are a few takeaways worthy of emphasis, all of which will resurface moving forward.

\begin{enumerate}
\item \textit{The Power of Perspective:} In the hoops game, the infinite series may have been daunting. However, after looking at the problem with a different perspective, we saw that we can use a memoryless process to attack an otherwise difficult problem. In fact, the memoryless process is one of the most powerful probability tools we have because it replaces the daunting challenge of infinite calculations with finite ones. Again, any time we can reduce an infinite problem to a finite problem is cause for celebration, as we are making enormous progress! (Technically infinite progress!)\\ \

\item \textit{Circumvent Algebra with Deeper Understanding:} Frequently there is a lot of messy algebra that goes into finding a formula for an infinite sum. The tricks we used to circumnavigate this algebra are great, and we should look for those types of shortcuts as often as possible.\\ \

\item \textit{The Depth of a Problem Is Not Always What You Expect:} Originally, we may have thought we needed the geometric series to solve this problem. It turns out we didn't! This will be a valuable insight for the M\&M game in that we should look for ways to simplify problems from infinite sums to finite expressions. That way, we don't have to deal with difficult infinite expressions.\\ \

\item \textit{Math is Fun:} How could anyone think otherwise?
\end{enumerate}

\section{Memoryless \mandms}

\subsection{Setup}

Remember (equation \eqref{eq:probtiek}) that we have an infinite sum for the probability of a tie with both people starting with $k$ \mandms:
\[ P(k,k) \ = \  \sum_{n=k}^{\infty} {n - 1 \choose k-1} \left(\frac{1}{2}\right)^{n-1}\frac{1}{2} \cdot {n - 1 \choose k-1} \left(\frac{1}{2}\right)^{n-1}\frac{1}{2}.\]


It's hard to evaluate this series as we have an infinite sum \textit{and} a squared binomial coefficient whose top is changing. Thus instead of evaluating this sum, which is very difficult to do, we want to somehow convert it to something where we have more familiarity. From the hoops game, we should be thinking about how to obtain a \textit{finite} calculation. The trick there was to notice we had a memoryless process, and all that mattered was the game state, not how we reached it. For our problem, we'll have many tosses of the coins, but in the end what matters is where we are, not the string of heads and tails that got us there.
	
Let's figure out some way to do this by letting $k=1$. In this case, we can do the same thing we did in the hoops game and boil the problem down into cases. There are four equally likely scenarios each time we toss coins, so the probability of each event occurring is 1/4 or 25\%.

\begin{enumerate}
\item Both players eat.
\item Cam eats an M\&M but Kayla does not.
\item Kayla eats an M\&M but Cam does not.
\item Neither eat.
\end{enumerate}	

These four possibilities lead to the infinite series in \eqref{eq:probtiek}, as we calculate the probability the game ends in $n$ tosses. It turns out one of the four events is not needed, and if we remove it we can convert to a finite game.

Let's revisit the lessons of the hoops game. There, we saw that we could create a \textbf{\textit{memoryless process}} by saying if Bird and Magic both missed their free throws, it was as if the game started over. We can do the same thing here: if Cam and Kayla both get tails and therefore don't eat their M\&Ms, then it's as if the coin toss never happened. We can therefore ignore the fourth possibility. If you want, another way to look at this is that if we toss two tails then there is no change in the number of \mandms\ for either kid, and thus we may pretend such a toss never happened. This allows us to remove all the tosses of double tails, and now after each toss at least one player, possibly both, have fewer \mandms. As we start with a finite number of \mandms, the game terminates in a finite number of moves.

Thus instead of viewing our game as having four alternatives each toss, there are only three and they all happen with probability 1/3. To see this, note that if ${\rm Pr}(X)$ is the probability that event $X$ happens, we now have a conditional probability problem\footnote{The standard notation is to write ${\rm Pr}(A|B)$ for the probability that $A$ happens, given that $B$ happens.}; for example, what is the probability Cam and Kayla both eat an \mandm\, \emph{given that the outcome is not double tails}? If $C$ denotes the event that Cam gets a head and eats (and $C^c$ the event that he gets a tail), and similarly $K$ for Kayla, then \begin{eqnarray} {\rm Pr}({\rm both\ eat}|{\rm at\ least\ one\ eats}) & \ = \ & \frac{{\rm Pr}(C \cap K)}{{\rm Pr}(C \cap K) + {\rm Pr}(C \cap K^c) + {\rm Pr}(C^c \cap K)} \nonumber\\ & \ = \ & \frac{1/4}{1/4 + 1/4 + 1/4} \ = \ \frac13. \nonumber \end{eqnarray}

We may therefore consider the related game with just three outcomes for each set of tosses, each happening with probability 1/3:

\begin{enumerate}
\item both players eat;
\item Cam eats an M\&M but Kayla does not;
\item Kayla eats an M\&M but Cam does not.
\end{enumerate}

Notice that after each toss the number of \mandms\ is decreased by either 1 or 2, so the game ends after at most $2k-1$ tosses.


\subsection{Solution}

Armed with the reduction from the previous subsection, we can replace the infinite sum of \eqref{eq:probtiek} with a finite sum.

\begin{thm}\label{thm:memorylessmandmfinitesum} The probability the \mandm\ Game ends in a tie with two people using fair coins and starting with $k$ \mandms\ is \be\label{eq:finitememoryless} P(k,k) \ = \  \sum_{n=0}^{k-1} {2k - n - 2 \choose n} \left(\frac{1}{3}\right)^n {2k - 2n - 2 \choose k - n - 1} \left(\frac{1}{3}\right)^{k-n-1} \left(\frac{1}{3}\right)^{k-n-1} \frac{1}{3}.\ee
\end{thm}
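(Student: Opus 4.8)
The plan is to argue entirely inside the reduced three-outcome game from the previous subsection, where on each toss exactly one of the events (i) both players eat, (ii) Cam eats only, (iii) Kayla eats only occurs, each with probability $1/3$. Record the state as the ordered pair of remaining \mandm\ counts (Cam's, then Kayla's), which starts at $(k,k)$; events (i), (ii), (iii) lower the state by $(1,1)$, $(1,0)$, $(0,1)$ respectively, and the game is decided the instant a coordinate hits $0$. The first observation is that a tie can only arise when both players finish on the same toss, which forces the final move of the game to be event (i) taken from the state $(1,1)$. By the memoryless property, given the walk is at $(1,1)$ the next toss is event (i) with probability $1/3$, and $(1,1)$ is visited at most once; hence $P(k,k)$ equals $\tfrac13$ times the probability the walk from $(k,k)$ ever reaches $(1,1)$, which in turn equals $\tfrac13\sum (1/3)^{\ell}$ over all legal plays (lattice paths) from $(k,k)$ to $(1,1)$ of length $\ell$ with steps $(1,1)$, $(1,0)$, $(0,1)$.

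Next I would enumerate those paths. If a path uses $n$ steps of type (i), then since Cam eats exactly $k-1$ times along it (his count drops from $k$ to $1$) and he eats on types (i) and (ii), it uses exactly $k-1-n$ steps of type (ii); symmetrically exactly $k-1-n$ of type (iii). This forces $0\le n\le k-1$ and total length $\ell=n+2(k-1-n)=2k-n-2$, so each such path occurs with probability $(1/3)^{2k-n-2}$, and the number of orderings of its steps is the multinomial coefficient $\binom{2k-n-2}{n,\,k-n-1,\,k-n-1}=\binom{2k-n-2}{n}\binom{2k-2n-2}{k-n-1}$. Summing over $n$ and inserting the factor $\tfrac13$ for the closing move gives $P(k,k)=\tfrac13\sum_{n=0}^{k-1}\binom{2k-n-2}{n}\binom{2k-2n-2}{k-n-1}(1/3)^{2k-n-2}$, which is precisely \eqref{eq:finitememoryless} after redistributing $(1/3)^{2k-n-2}\cdot\tfrac13=(1/3)^{n}(1/3)^{k-n-1}(1/3)^{k-n-1}\tfrac13$.

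The hard part will be justifying that the raw multinomial really is the right count, i.e.\ that \emph{every} ordering of those $n$, $k-1-n$, $k-1-n$ steps is a legal play in which no coordinate reaches $0$ before the walk first lands on $(1,1)$ (otherwise the game would already be over, and not a tie). This should fall out cleanly: along any such path Cam eats at most $k-1$ times in total and Kayla likewise, so after any initial segment each coordinate has dropped by at most $k-1$ and hence is still $\ge 1$; moreover a short count shows $(1,1)$ is attained only after all $2k-n-2$ steps, never sooner, so no play is double-counted and none leaves through a different boundary point. With positivity automatic, the enumeration is exact. Finally I would sanity-check the identity against Theorem~\ref{thm:maininfiniteexpansion} in a small case --- e.g.\ $k=2$ gives $5/27$ by either formula --- and note that the $n=k-1$ summand is the all-doubles play $(k,k)\to\cdots\to(1,1)\to(0,0)$, while the $n=0$ summand is the longest possible game, of $2k-1$ tosses, matching the bound noted at the end of the setup.
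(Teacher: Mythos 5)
Your argument is correct and is essentially the paper's own proof: both condition on the final toss being double heads, let $n$ count the earlier double-head tosses, and count the $\binom{2k-n-2}{n}\binom{2k-2n-2}{k-n-1}$ orderings of the remaining steps, each with probability $(1/3)^{2k-n-1}$. Your lattice-path phrasing and the explicit check that no ordering lets a player run out early are just a more careful writing of the same count.
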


\begin{proof} Each of our three possibilities (both eat, just Cam eats, just Kayla eats) happens with probability 1/3. Since the game ends in a tie, we know the final toss must be double heads with both eating, and each must eat exactly $k-1$ \mandms\ in the earlier tosses. Let $n$ denote the number of times both eat before the final toss (which again we know must be double heads); clearly $n \in \{0, 1, \dots, k-1\}$. We thus have $n+1$ double heads, and thus Cam and Kayla must each eat $k-(n+1) = k - n - 1$ times when the other doesn't eat.

We see that, in the case where there are $n+1$ double heads (with the last toss being double heads), the total number of tosses is $$(n+1) + (k - n - 1) + (k - n - 1) \ = \ 2k - n - 1.$$ In the first $2k - n - 2$ tosses we must choose $n$ to be double heads, then of the remaining $(2k-n-2) - n = 2k - 2n - 2)$ tosses before the final toss we must choose $k-n-1$ to be just heads for Cam, and then the remaining $k-n-1$ tosses before the final toss must all be just heads for Kayla. These choices explain the presence of the two binomial factors. As each toss happens with probability 1/3, this explains those factors; note we could have just written $(1/3)^{2k-n-1}$, but we prefer to highlight the sources.
\end{proof}


\section{Viewing Data}


\subsection{Plotting Exact Answer}

Before turning to additional ways to solve the problem, it is worthwhile to pause for a bit and discuss how to view data and use results for small $k$ to predict results for larger ones.

While it is not obvious how we could replace the sum in \eqref{eq:finitememoryless} with a nice closed form expression involving $k$, this finite sum is certainly easier to use than the infinite sum in \eqref{eq:probtiek}. In fact, it's  very easy to use the finite sum to compute the exact answer. Below is some simple code to do so in Mathematica (and plot the result).\\ \

\begin{verbatim}
p[k_] := Sum[Binomial[2 k - n - 2, n] Binomial[2 k - 2 n - 2,
    k - n - 1] (1/3)^(2 k - n - 1), {n, 0, k - 1}]
tielist = {};
For[k = 1, k <= 1000, k++, tielist = AppendTo[tielist, {k, p[k]}]]
ListPlot[tielist, AxesLabel -> {"k", "Probability of a tie"}]
\end{verbatim}

\ \\

For example, if $k=1$ the probability of a tie is 1/3; this is quite reasonable, as there are three equally likely possibilities now and only one of them leads to a tie when both start with one \mandm. Some other fun values: if $k=2$ the probability is $5/27 \approx .185$, if $k=5$ it is $1921/19683 \approx .098$, if $k=10$ it falls to almost $.066$, while for $k=100$ it's about $.020$. See Figure \ref{fig:probtiekatmost1000} for more values.

\begin{figure}[h]
\begin{center}
\scalebox{1}{\includegraphics{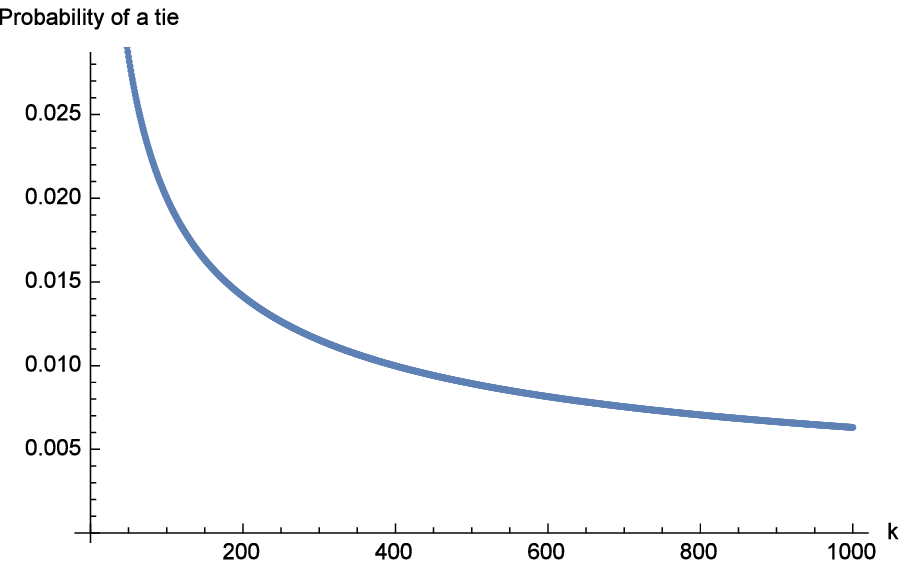}}
\caption{\label{fig:probtiekatmost1000} The probability of a tie for $k \le 1000$.}
\end{center}\end{figure}


\subsection{Log-log Plots}

While equation \eqref{eq:finitememoryless} gives us a nice formula for finite computations, it is hard to see the $k$ dependence. To try and guess how the answer varies with $k$ we can do a plot, but it's  hard to look at the results in Figure \ref{fig:probtiekatmost1000} and extrapolate to larger values of $k$. For example, what would you guess for the probability of a tie if there are 200 \mandms? If there are 2016?

An important skill to learn is how to view data. Frequently rather than plotting the data as given it's better to do a log-log plot. What this means is that instead of plotting the probability of a tie as a function of $k$, we plot the logarithm of the probability of a tie against the logarithm of $k$. We do this in Figure \ref{fig:logprobtiekatmost1000}.

\begin{figure}[h]
\begin{center}
\scalebox{1}{\includegraphics{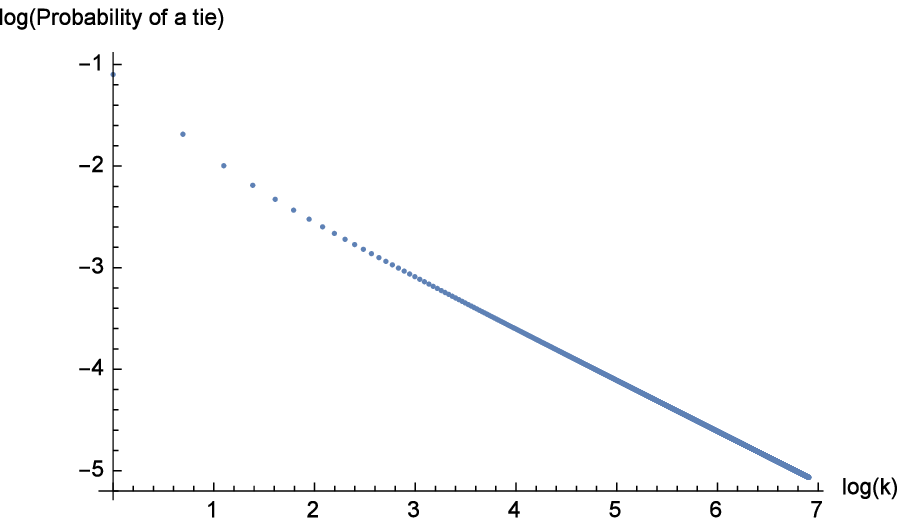}}
\caption{\label{fig:logprobtiekatmost1000} The probability of a tie for $k \le 1000$.}
\end{center}\end{figure}


Notice that the plot here looks \emph{very} linear. Lines are probably the easiest functions to extrapolate, and if this linear relationship holds we should be able to come up with a very good prediction for the logarithm of the probability (and hence by exponentiating obtain the probability). We do this in the next section.


\subsection{Statistical Inference}

Let's try to predict the answer for large values of $k$ from smaller ones. The fifth named author gave a talk on this at the 110\textsuperscript{th} meeting of the Association of Teachers of Mathematics in Massachusetts in March 2013, which will explain the prevalence of 110 and 220 below.

\begin{figure}[h]
\begin{center}
\scalebox{1}{\includegraphics{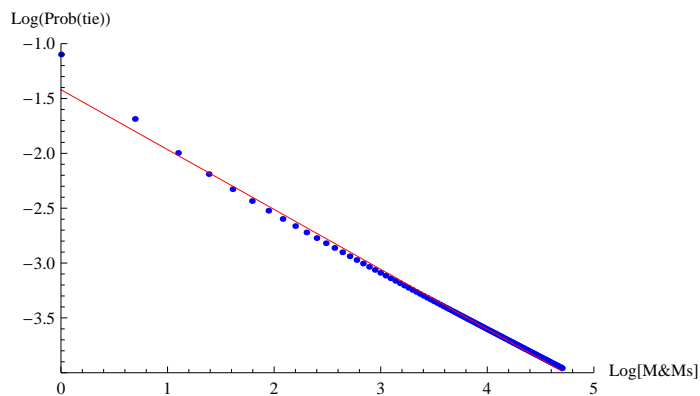}}
\caption{\label{fig:MandMgameProbTie1to110LogLog} The probability of a tie for $k \le 110$. The best fit line is good, but is noticeable non-perfect.}
\end{center}\end{figure}

Figure \ref{fig:MandMgameProbTie1to110LogLog} gives the log-log plot for $k \le 110$. Using the Method of Least Squares from Statistics\footnote{These formulas can be derived using multivariable calculus and linear algebra. For a derivation, see for example \bburl{http://web.williams.edu/Mathematics/sjmiller/public_html/105Sp10/handouts/MethodLeastSquares.pdf}.} with $P(k)$ the probability of a tie when we start with $k$ \mandms, we find a predicted best fit line of $$\log\left(P(k))\right) \ \approx \ -1.42022 - 0.545568 \log k,$$ or exponentiating $$P(k) \ \approx\ 0.2412 / k^{.5456}.$$ This predicts a probability of a tie when $k=220$ of about 0.01274, but the answer is approximately 0.0137. While we are close, we are off by a significant amount. (In situations like this it is better to look at not the difference in probabilities, which is small, but the percentage we are off; here we differ by about 10\%.)

Why are we so far off? The reason is that small values of $k$ are affecting our prediction more than the should. If we have a main term in the log-log plot which is linear, it will eventually dominate lower order terms \emph{but} those lower order terms could have a sizable effect for low $k$. Thus, it's a good idea to ignore the smaller values when extrapolating our best fit line.

In Figure \ref{fig:MandMgameProbTie50to110LogLog} we now go from $k=50$ to $110$.

\begin{figure}[h]
\begin{center}
\scalebox{1}{\includegraphics{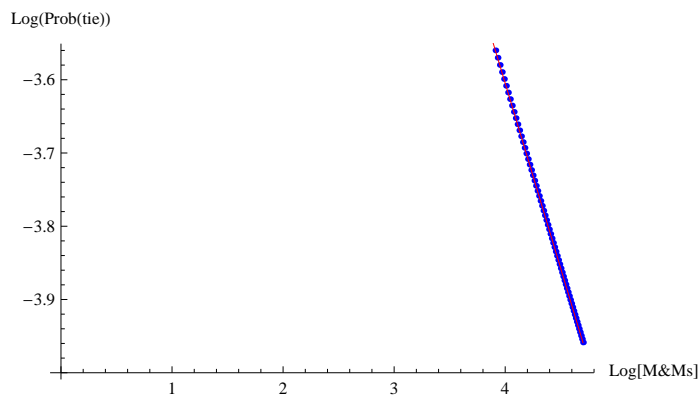}}
\caption{\label{fig:MandMgameProbTie50to110LogLog} The probability of a tie for $50 \le k \le 110$. The best fit line is almost indistinguishable from the data.}
\end{center}\end{figure}

Our new best fit line is  $$\log\left(P(k)\right) \ \approx \ -1.58261 - 0.50553  \log k,$$ or exponentiating $$P(k)\ \approx\ 0.205437 / k^{.50553};$$ we should compare this to our previous prediction of $0.241662 / k^{.5456}$). Using our new formula we predict 0.01344 for $k=220$, which compares \emph{very} favorably to the true answer of 0.01347.

The point of this section is to give  you a brief introduction to the power of statistics and extrapolating, and give you a sense of some of the issues in their use. We are able to get a fairly reasonable prediction with very little work, and if we clean up the data a little we improve to a phenomenal agreement.

\section{Recurrences}

As our goal is not to write a book on this game but rather to show how it leads to many good mathematical items, in the interest of space we will just briefly discuss two final approaches, recurrence relations in this section and hypergeometric functions in the next. See \cite{Mil} for a quick introduction to recurrences.

\subsection{Recurrence Review}

If you've seen the Fibonacci numbers $$\{F_n\}_{n=0}^\infty\ =\ \{0, 1, 1, 2, 3, 5, 8, \dots\},$$ you've seen a terrific example of a linear recurrence with constant coefficients, as they are the unique sequence satisfying $$F_{n+2} \ = \ F_{n+1} + F_n, \ \ \ F_0 \ = \ 0, \ \ \ F_1 \ = \ 1.$$ Once we know the relation and the first two coefficients, we can find any subsequent value by substituting. Unfortunately, this is costly in practice, as computing the $n$\textsuperscript{th} term requires us to know all the previous ones.

Fortunately, there are many ways to efficiently solve problems like this, and often these lead to beautiful closed form expressions. To solve the Fibonacci relation we guess $F_n = r^n$. Why is this reasonable? Clearly the Fibonacci sequence is non-decreasing, so $F_{n+2} \le 2 F_{n+1}$, which means that every time we increase the index by 1 we at most double our number, so $F_n \le 2^n$. Similarly we find $F_{n+2} \ge 2 F_n$; now increasing the index by 2 causes us to at least double, so increasing the index by 1 should yield an increase of at least a  factor of $\sqrt{2}$. Thus we expect the Fibonaccis to satisfy a relation such as $$2^{n/2}\ \le\ F_n\ \le\ 2^n,$$ which is highly suggestive of exponential growth; this is why we try $F_n = r^n$.

Substituting this into the recurrence we obtain the characteristic polynomial for $r$, which, after dividing both sides by $r^n$, is $$r^2 \ = \ r + 1;$$ the solutions to this are $$r_1 \ = \ \frac{1+\sqrt{5}}{2}, \ \ \ r_2 \ = \ \frac{1-\sqrt{5}}{2}.$$ A beautiful property of linear recurrences is that an arbitrary linear combination of solutions is a solution, and we find the general solution of the Fibonacci recurrence is $$F_n \ = \ c_1 r_!^n + c_2 r_2^n.$$ As we require $F_0 = 0$ and $F_1 = 1$, after some more algebra we obtain Binet's Formula, the spectacular relation $$F_n \ = \ \frac1{\sqrt{5}} \left(\frac{1+\sqrt{5}}{2}\right)^n - \frac1{\sqrt{5}} \left(\frac{1-\sqrt{5}}{2}\right)^n.$$ This formula is amazing: it gives us a simple, closed form expression for the $n$\textsuperscript{th} Fibonacci number; we can jump to this term in the sequence \emph{without} computing any of the earlier ones!\footnote{Additionally, as the Fibonaccis are all integers Binet's formula must return an integer; at first this might seem unlikely, as our expression involves square-roots and fractions, but fortunately everything that needs to cancel does.}

The point of the above is to give a brief glimpse at the rich theory; there is far more that could be said (especially concerning generating function approaches to solve problems such as these), but for our purposes this suffices. The main takeaways is that sometimes we are lucky and able to derive simple closed form expressions, but even if we cannot we are often able to determine the terms by repeated application of the defining relation and initial conditions.

\subsection{The \mandm\ Recurrence}

Even though we have a finite sum for the probability of a tie (equation  \ref{eq:finitememoryless}), finding that required some knowledge of combinatorics and binomial coefficients. We give an alternate approach which avoids these ideas. It's possible to do it with or without noting that we have a memoryless process. We'll do the memoryless process first as we'll assume we're still clever enough to notice that, and then remark afterwards how we would have found the same formula even if we didn't realize this.

We need to consider a more general problem. We always denote the number of \mandms\ Cam has with $c$, and Kayla with $k$\footnote{We can see the power of good notation. Using $c$ and $k$ to represent the number of \mandms\  Cam and Kayla each have allows us to know exactly what is going on as the math gets more involved.}; we frequently denote this state by $(c,k)$. Then we can rewrite the three equally likely scenarios, each with probability 1/3, as follows:\\ \

\begin{itemize}
\item $(c,k) \longrightarrow (c-1, k-1)$ (double heads and both eat),
\item $(c,k) \longrightarrow (c-1, k)$ (Cam gets a head and Kayla a tail),
\item $(c,k) \longrightarrow (c, k-1)$ (Cam gets a tail and Kayla a head).\\ \
\end{itemize}

If we let $x_{c,k}$ denote the probability the game ends in a tie when we start with Cam having $c$ \mandms\ and Kayla having $k$, we can use the above to set up a recurrence relation. How so? Effectively, on each turn we move from $(c,k)$ in exactly one of the following three ways: either Cam and Kayla both eat an M\&M in which case $(c,k) \longrightarrow (c-1, k-1)$; only Cam flips heads and eats an M\&M in which case $(c,k) \longrightarrow (c-1, k)$; or Kayla is the only one to eat an M\&M in which case $(c,k) \longrightarrow (c, k-1)$. Now, we can use simpler game states to figure out how the probability of a tie when we start with more M\&M, as in each of the three cases we have reduced the total number of \mandms\ by at least one. We thus find that the recurrence relation satisfied by $\{x_{c,k}\}$ is
\begin{equation}\label{eq:mandmrecurrence}
x_{c,k} \ = \  \frac{1}{3} x_{c-1,k-1} + \frac{1}{3} x_{c-1,k} + \frac{1}{3} x_{c,k-1} \ = \  \frac{x_{c-1,k-1} + x_{c-1,k} + x_{c,k-1}}{3}.
\end{equation}

From our work on the Fibonacci numbers we know that cannot be the full story -- we need to specify initial conditions. A little thought says $x_{0,0}$ must be 1 (if they both have no \mandms\ then it must be a tie), while $x_{c,0} = 0$ if $c>0$ and similarly $x_{0,k} = 0$ if $k > 0$ (as in these cases exactly one of them has an \mandm, and thus the game cannot end in a tie).

We have made tremendous progress. We use these initial values and the recurrence relation \eqref{eq:mandmrecurrence} to determine $x_{c,k}$. Unfortunately we cannot get a simple closed form expression, but we can easily compute the values by recursion. A good approach is to compute all $x_{c,k}$ where $c+k$ equals sum fixed sum $s$. We've already done the cases $s = 0$ and $s=1$, finding $x_{0,0} = 1$, $x_{0,1} = x_{1,0} = 0$.

We now move to $s=2$. We need only find $x_{1,1}$, as we know $x_{2,0} = x_{0,2} = 0$. Using the recurrence relation we find
$$x_{1,1} \ = \  \frac{x_{0,0} + x_{0,1} + x_{1,0}}{3} \ = \  \frac{1 + 0 + 0}{3} \ = \  \frac{1}{3}.$$

Next is the case when the indices sum to 3. Of course, $x_{0,3} = x_{3,0} = 0$, so all we need are $x_{1,2}$ and $x_{2,1}$ (which by symmetry are the same). We find
\[x_{2,1} \ = \  x_{1,2} \ = \  \frac{x_{1,1} + x_{2,0} + x_{0,2}}{3} \ = \  \frac{1/3 + 0 + 0}{3} \ = \  \frac{1}{9}.\]

We can continue to $s=4$, and after some algebra easily obtain \[x_{2,2} \ = \  \frac{x_{1,1} + x_{2,1} + x_{1,2}}{3} \ = \  \frac{5}{27}.\]

If we continued on with these calculations, we would find that $x_{3,3} = \frac{11}{81}$, $x_{4,4} = \frac{245}{2187}$, $x_{5,5} = \frac{1921}{19863}$, $x_{575,6561} = \frac{11}{81}$, $x_{7,7} = \frac{42635}{531441}$, and $x_{8,8} = \frac{355975}{4782969}$.  The beauty of this recursion process is that we have a sure-fire way to figure out the probability of a tie at different states of the M\&M game. We leave it as an exercise to the interested reader to compare the computational difficulty of finding $x_{100,100}$ by the recurrence relation versus by the finite sum \eqref{eq:finitememoryless}.

We end with one final comment on this approach. It's possible to recast this problem as one in counting paths on a graph. In Figure \ref{fig:MandMgame} we start with $(c,k) = (4,4)$, and look at all the possible paths that end in $(0,0)$. The probability of any path is equal to $(1/3)^t$, where $t$ is the number of terms in the path. It turns out the solution is very similar to the famous Catalan numbers, which count the number of paths from $(0,0)$ to $(n,n)$ moving in unit horizontal or unit vertical steps and never going above the main diagonal; the difference here is that we now have three possible choices at each turn.

\begin{figure}[h]
\begin{center}
\scalebox{.7}{\includegraphics{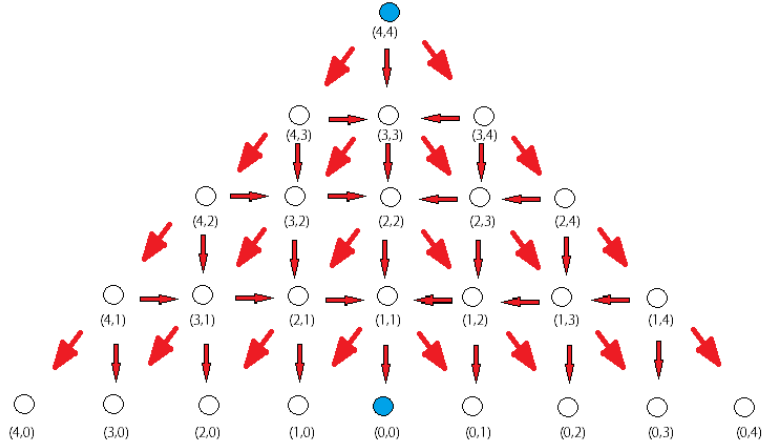}}
\caption{\label{fig:MandMgame} The M\&M game when $k=4$. Count the paths! Answer 1/3 of probability hit (1,1).}
\end{center}\end{figure}

\subsection{Forgetting Memoryless Processes}

In the previous subsection we found a recurrence relation for $x_{c,k}$, but our analysis was based on there only being three options at each step. What if we hadn't noticed there was a memoryless process lurking: would we still have found the same relation? In that case, there would now be four possibilities on each turn, each happening with probability 1/4.\\ \

\begin{itemize}
\item $(c,k) \longrightarrow (c-1, k-1)$ (double heads and both eat),
\item $(c,k) \longrightarrow (c-1, k)$ (Cam gets a head and Kayla a tail),
\item $(c,k) \longrightarrow (c, k-1)$ (Cam gets a tail and Kayla a head),
\item $(c,k) \longrightarrow (c, k)$ (double tails and neither eats). \\ \
\end{itemize}

We now obtain the following relation: $$x_{c,k} \ = \ \frac14 x_{c-1,k-1} + \frac14 x_{c-1,k} + \frac14 x_{c,k-1} + \frac14 x_{c,k}.$$ Note that if we bring the $\frac14 x_{c,k}$ over to the left hand side we relate $\frac34 x_{c,k}$ to multiples of $x_{c-1,k-1}, x_{c-1,k}$ and $x_{c,k-1}$: $$\frac34 x_{c,k} \ = \ \frac14 x_{c-1,k-1} + \frac14 x_{c-1,k} + \frac14 x_{c,k-1}.$$ If we then multiply through by $4/3$ we regain our old recurrence, equation \eqref{eq:mandmrecurrence}: $$x_{c,k} \ = \ \frac13 x_{c-1,k-1} + \frac13 x_{c-1,k} + \frac13 x_{c,k-1}.$$

This is wonderful: it means if we did not initially notice that there was a memoryless process, doing the algebra suggests there should be one!


\subsection{Revisiting and Generalizing the Hoops Game}

When you learn new concepts in math, it often pays great dividends to revisit earlier problems. Let's go back to the hoops game; not surprisingly, we'll see that, similar to the \mandm\ Game, we can cast it as a double recurrence.

The way the recurrence method worked was we reduced the problem we wished to study to a simpler state; however, that required us to know those answers. Thus it is not enough to deal with just $x_B$ in general, and we need to study $x_{B;b,m}$, which is the probability Bird wins when he needs to make $b$ more baskets to win, Magic needs to make $m$ more to win, \emph{and  Bird is currently shooting!}.\footnote{Another option is that we could introduce $x_{M;b,m}$, the corresponding probability where now Magic is shooting.} We find
\bea\label{eq:basketballrecurrence} x_{B;b,m} & \ =\ & p_B p_M x_{B;b-1,m-1}  + p_B (1-p_M) x_{B;b-1,m} \nonumber\\ & & \ \ \  +\ (1 - p_B) p_M x_{B;b,m-1} + (1 - p_B) (1 - p_M) x_{B;b,m}. \eea To see why this is true, let's look at the first term. The $p_B$ means Bird got a basket, reducing the number he needs by 1. Note that if $b-1 = 0$ then Bird wins and the game should stop (we'll deal more with this in a moment). Now it's Magic's turn to shoot. If he gets a basket, which happens with probability $p_M$, that reduces his number of baskets needed to $m-1$, which explains the $p_B p_M x_{B;b-1,m-1}$ term; the other three terms arise from the other possibilities.

We also need the initial conditions. Clearly $x_{B;b,0} = 0$ if $b > 0$ and $x_{B;0,m} = 1$ if $m > 0$, but what should we choose for $x_{B;0,0}$? Well, the way to interpret this is that each needs to make zero baskets and Bird shoots first, so he is the first to reach zero baskets. Thus we set $x_{B;0,0}$ equal to 1. Another justification is that we only reach this situation when Bird makes a basket and then Magic, who shouldn't have been allowed to shoot as Bird just won the game, shoots. These normalizations often are tricky, but can frequently be determined by a good story. This is similar to the convention that $0! = 1$, which we interpret as there is only one way to do nothing (i.e.,  there is one way to order no elements -- there shouldn't be multiple ways to do nothing!).

We have thus found a recurrence for $x_{B;b,m}$. Let's check and make sure it reduces to our previous result when $b=m=1$. In that case, equation \eqref{eq:basketballrecurrence}  becomes $$x_{B;1,1} \  = \ p_B p_M 1 + p_B (1-p_M) 1 + (1-p_B) p_M 0 + (1 - p_B) (1 - p_M) x_{B;1,1}.$$ Remembering that we use $r$ for $(1 - p_B) (1 - p_M)$ after some simple algebra we obtain $$x_{B;1,1} \ = \ \frac{p_B}{1 - r},$$ exactly as before! Of course, this now suggests a natural question: what does $x_{B;b,b}$ look like as $b$ grows (let's say under the assumption that $p_B = p_M$)?

\section{Hypergeometric Functions}

We end our tour of solution approaches with a method that actually prefers the infinite sum to the finite one, hypergeometric functions (see for example \cite{AS, GR}). These functions arise as the solution of a particular linear second order differential equation:
\[x(1-x)y''(x) + [c - (1-a+b)x]y'(x) - a b y(x) \ = \ 0\] (this is also called Gauss's differential equation). This equation is useful because every other linear second order differential equation with three singular points (in the case they are at 0, 1, and $\infty$) can be transformed into it. As this is a second order differential equation there should be two solutions. One is
\[y(x)\ = \ 1 + \frac{abx}{c1!}+\frac{a(a+1)b(b+1)x^2}{c(c+1)2!} + \frac{a(a+1)(a+2)b(b+1)(b+2)x^3}{c(c+1)(c+2)3!} + \cdots,\] so long as
$c$ is not a non-positive integer; we denote this solution by ${\ }_{2}F_{1}(a,b;c;z)$. By choosing appropriate values of $a, b$  and $c$ we recover many special functions. Wikipedia lists three nice examples: $$\log(1+x) \ = \ x{\ }_{2}F_{1}(1,1;2;-x), \ \ \ (1-x)^{-a} \ = \ {\ }_{2}F_{1}(a,1;1;x), \ \ \ \arcsin(x) \ = \ x {\ }_{2}F_{1}(1/2,1/2;3/2;x^2).$$

By introducing some notation we can write the series expansion more concisely. We define the Pochhammer symbol by $$(a)_n\ =\ a (a+1) (a+2) \cdots (a+n-1) \ = \ \frac{(a+n-1)!}{(a-1)!}$$ (where the last equality holds for integer $a$; for real $a$ we need to interpret the factorial as its completion, the Gamma function). Our solution becomes
\[{\ }_{2}F_{1}(a,b,c;x) \ = \ \sum_{n=0}^\infty \frac{(a)_n (b)_n x^n}{(c)_n n!}.\] Note the factorials in the above expression suggest that there should be connections between hypergeometric functions and products of binomial coefficients. In this notation, the 2 represents the number of Pochhammer symbols in the numerator, the 1 the number of Pochhammer symbols in the denominator, and the $a$, $b$, and $c$ are what we evaluate the symbols at (the first two are the ones in the numerator, the last the denominator). One could of course consider more general functions, such as
\[{\ }_{s}F_{t}(\{a_i\},\{b_j\};x) \ = \ \sum_{n=0}^\infty \frac{(a_1)_n\cdots (a_s)_n x^n}{(b_1)_n \cdots (b_t)_n n!}.\]

The solution ${\ }_{2}F_{1}(a,b,c;x)$ is called a hypergeometric function, and if you look closely at it while recalling the infinite sum solution to the M\&M Game you might see the connection. After some algebra where we convert the binomial coefficients in the infinite sum solution \eqref{eq:probtiek} to the falling factorials that are the Pochhammer symbols, we find the following closed form solution.

\begin{thm} The probability the \mandm\ Game ends in a tie with two people using fair coins and starting with $k$ \mandms\ is \be\label{eq:probtiekhypergeometric} P(k,k) \ = \ {\ }_2F_{1}(k,k,1;1/4)4^{-k}.\ee \end{thm}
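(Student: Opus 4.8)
The plan is to start from the infinite-sum expression for $P(k,k)$ in Theorem~\ref{thm:maininfiniteexpansion}, reindex the sum so that it begins at $0$, pull out the factor $4^{-k}$, and then recognize the leftover series as ${\ }_2F_1$ evaluated at $1/4$ by matching it termwise against the Pochhammer-symbol definition of the hypergeometric function.

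First I would substitute $m = n-k$ in \eqref{eq:probtiek}. Since $n$ ranges over $k, k+1, \dots$, the new index $m$ ranges over $0, 1, \dots$, and the identity $n-1 = m + k - 1$ together with $2^{-2n} = 2^{-2(m+k)} = 4^{-k} \cdot 4^{-m}$ gives
\[ P(k,k) \ = \ \sum_{m=0}^\infty \ncr{m+k-1}{k-1}^2 \frac{1}{4^{k}}\cdot\frac{1}{4^{m}} \ = \ 4^{-k} \sum_{m=0}^\infty \ncr{m+k-1}{k-1}^2 \left(\frac14\right)^m. \]
So it suffices to show that $\sum_{m \ge 0} \ncr{m+k-1}{k-1}^2 (1/4)^m$ equals ${\ }_2F_1(k,k,1;1/4)$. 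Next I would recall the two elementary identities $(1)_m = 1 \cdot 2 \cdots m = m!$ and, because $k$ is a positive integer, $(k)_m = k(k+1)\cdots(k+m-1) = (k+m-1)!/(k-1)!$. Dividing the second by $m!$ yields $(k)_m/m! = (k+m-1)!/\big((k-1)!\,m!\big) = \ncr{k+m-1}{k-1}$. Hence the general term of
\[ {\ }_2F_1(k,k,1;x) \ = \ \sum_{m=0}^\infty \frac{(k)_m (k)_m}{(1)_m\, m!}\, x^m \ = \ \sum_{m=0}^\infty \left(\frac{(k)_m}{m!}\right)^2 x^m \]
is exactly $\ncr{m+k-1}{k-1}^2 x^m$. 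Specializing to $x = 1/4$ and comparing with the displayed series for $P(k,k)$ completes the proof. The series converges since the hypergeometric series ${\ }_2F_1(a,b;c;z)$ has radius of convergence $1$ (and $c=1$ is not a non-positive integer), or equivalently because we already know the sum of probabilities in \eqref{eq:probtiek} converges, as noted in the footnote following Theorem~\ref{thm:maininfiniteexpansion}.

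There is no serious obstacle here; the only care needed is in the bookkeeping when converting the squared binomial coefficient into a ratio of Pochhammer symbols — in particular, verifying that the denominator is $(1)_m\, m! = (m!)^2$ so that the two copies of $(k)_m/m!$ combine into $\ncr{m+k-1}{k-1}^2$ — and in getting the shift of the lower summation limit right so that the exponent $4^{-k}$ is extracted cleanly. No tool beyond the definition of ${\ }_2F_1$ and the factorial form of $(a)_n$ is required.
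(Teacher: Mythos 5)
Your proposal is correct and follows exactly the route the paper sketches (the paper only says ``after some algebra\dots convert the binomial coefficients in the infinite sum solution to the falling factorials that are the Pochhammer symbols''); you have simply carried out that algebra in full, and the key identity $(k)_m/m! = \ncr{k+m-1}{k-1}$ together with the reindexing $m=n-k$ is exactly the right bookkeeping. No gaps.
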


It is not immediately clear that this is progress; after all, it looks like we've just given a fancy name to our infinite sum. Fortunately, special values of hypergeometric functions are well studied, and a lot is known about their behavior as a function of their parameters. We encourage the interested reader to explore the literature and discover how `useful' the above is.


\section{OEIS}

We end with a short bonus section on how to guess formulas. There is an enormous wealth of information available on-line, but often it is  hard to figure out what we need and where it resides. A terrific resource is the On-Line Encyclopedia of Integer Sequences (OEIS, \bburl{http://oeis.org/}). This is a wonderful resource with a large number of integer sequences tabulated and stored. You enter some known terms in your sequence, and the site not only tells you what sequences it knows that agree with this, but it provides links, properties and formulas when it can!

For example, if we use our finite series expansion \eqref{eq:finitememoryless} or the recurrence relation \eqref{eq:mandmrecurrence} we can easily calculate the probability of a tie for some small $k$. We give the probabilities for $k$ up to 8 in Table \ref{table:probtie}. In addition, we also give $3^{2k-1} P(k,k)$. The reason we do this is that looking at the probability of a tie one is struck by the fact that the denominators are all powers of 3; after a little algebra we see that if we multiply by $3^{2k-1}$ we clear the denominators, and we will obtain a sequence of \emph{integers}. Note that it is very important that we end with integers and not rational numbers if we wish to use the OEIS.

\begin{center}
\begin{table}[h]
\begin{tabular}{|c||r|r|}
  $k$ & $P(k,k)$ & $3^{2k-1} P(k,k)$ \\
  \hline
  1 & 1/3 & 1 \\
  2 & 5/27 & 5 \\
  3 & 11/81 & 33  \\
  4 & 245/2187 & 245 \\
  5 &  1921/19683& 1921 \\
  6 & 575/6561 &  15525\\
  7 & 42635/531441 &  127905\\
  8 & 355975/4782969 & 1067925 \\
\end{tabular}
\caption{Probability of a tie as a function of the number of \mandms\ the two players have.}\label{table:probtie}
\end{table}
\end{center}

\begin{rek} If we didn't notice the right power of 3, we could have reached the same conclusion another way. There are three possibilities each time; from Figure \ref{fig:MandMgame} we saw our problem is equivalent to counting how  many paths there are from $(k,k)$ to $(0,0)$. As we end at $(0,0)$ our last step is forced, and the longest path comes when we never get double heads. As we must remove $2k-2$ \mandms\ (remember the last toss of double heads removed 2 of the $2k$), the longest path has $2k-2+1 = 2k-1$ steps, explaining the presence of this factor as the exponent of $3$. \end{rek}

Thus to the \mandm\ Game with two players we can associate the integer sequence 1, 5, 33, 245, 1921, 15525, 127905, 1067925, $\dots$. We plug that into the OEIS and find that it knows that sequence! It is sequence A084771 (see \bburl{http://oeis.org/A084771}). The very first comment there on this sequence is that it equals the number of paths in the graph we discussed!

The OEIS is a powerful tool for research. Think back to proofs by induction: if you are told what to prove, it is a lot easier and often the proof writes itself. The OEIS frequently gives you such an advantage.

\section{Takeaways and Further Questions}

We've seen many different ways of solving the \mandm\ Game, each leading to a different important aspect of mathematics. We end with some quick reflections on some  of the valuable lessons this game has to offer.\\ \
	
\begin{enumerate}
\item \textbf{Ask Questions:} Great mathematics is everywhere, waiting to be realized and explored. Often some of the deepest mathematics can be extracted from some of the most straightforward problems to state (for an excellent example see Conway's \emph{See and Say} (or \emph{Look and Say}) sequence. \\ \

\item \textbf{There are Many Ways to Solve Problems:} Different ways of solving the problem have different advantages. With the recurrence relation, we can calculate the answer for any given number of M\&Ms, but it might take a long time. With hypergeometric functions, we get a nice closed-form way of representing our solution. And with a memoryless process we get a finite sum involving well-understood binomial coefficients.\\ \

\item \textbf{Experience is a Useful and Great Guide:} We were able to make enormous leaps in the M\&M problem because we had the hoops game as a reference. So much of math is interconnected that a lot of times, all it takes for us to solve a difficult problem is remembering what we have done in the past.\\ \

\item \textbf{Need to Look at Data the Right Way:} When we looked at the numbers properly, we were able to make progress in solving the M\&M problem. This is no different for other math problems: so often, all it takes to figure out a complicated problem is the right lens.\\ \
\end{enumerate}

All of these takeaways are great, and they should lead to one last exercise: asking further questions. How long do we expect a game to take? What would happen to the M\&M problem if we increased the number of players? What about if all of the players started with different numbers of \mandms? Maybe the game would yield interesting results if the participants used biased coins.

In one of the first games ever played, Cameron, Kayla and Steven Miller each started with five \mandms\ and Kayla tossed five consecutive heads, dying immediately; years later she still talks about that memorable performance. There is a lot known about the longest run of heads or tails in tosses of a fair (or biased) coin (see for example \cite{Sch}). We can ask related questions here. What is the expected longest run of heads or tails by any player in a game? What is the expected longest run of tosses where all players' coins have the same outcome?

We could also revisit the hoops game and consider generalizations there. What if Bird and Magic keep shooting until someone makes $k$ baskets. What's the probability of a tie now? What if you keep shooting until you miss? We could also ask questions about streaks of hits and misses within the game. For another possibility, what if Bird's probability of making a basket and Magic's probability of a basket are independent random variables drawn uniformly\footnote{This means that for any interval $[a,b] \subset [0,1]$, the probability $p_B \in [a,b]$ is $b-a$, similarly for $p_M$, and knowledge of $p_B$  gives no information on $p_M$ (or vice-versa).} on $[0,1]$: what is the probability that Bird has a greater chance of winning than Magic? If you look at Figure \ref{fig:birdmagicplot} this problem with $k=1$ is equivalent to finding the area in the unit square above and to the left of the contour line with value .5; the answer turns out to be $\log(2) \approx .693147$. Is there a nice answer for general $k$? What if instead we ask what is the probability Bird wins if $p_B$ and $p_M$  are independently drawn uniform random variables on $[0, 1]$? If $k=1$ the answer is $\pi^2/6 - 1 \approx .644934$. It's nice that in both phrasings the answers are interesting numbers, and that the two different interpretations are quite close.

There are plenty of further questions out there, all of which would provide great insights not only into the M\&M game and its educational value but also into the study of math in general. We hope you will explore some of these or, even better, ones of your own choosing, and let us know what you find!


\ \\

\end{document}